\date{}
\date{}
\title{Factor of iid's through stochastic domination}
\author{\'Ad\'am  Tim\'ar }
\renewcommand\footnotemark{}
\newif\ifhyper\IfFileExists{hyperref.sty}{\hypertrue}{\hyperfalse}
\ifhyper\usepackage{hyperref}\fi
\theoremstyle{definition}
\newtheorem{example}{Example}[section]
\newtheorem{theorem}{Theorem}
\newtheorem{corollary}[theorem]{Corollary}
\newtheorem{lemma}[theorem]{Lemma}
\newtheorem{remark}[theorem]{Remark}
\newtheorem{definition}{Definition}
\def \proof {{ \medbreak \noindent {\bf Proof.} }}
\def\proofof#1{{ \medbreak \noindent {\bf Proof of #1.} }}
\def\proofcont#1{{ \medbreak \noindent {\bf Proof of #1, continued.} }}
\def\supp{{\rm supp}}
\def\max{{\rm max}}
\def\min{{\rm min}}
\def\dist{{\rm dist}}
\def\Aut{{\rm Aut}}
\def\id{{\rm id}}
\def\Stab{{\rm Stab}}
\begin{document}
\maketitle
\let\thefootnote\relax\footnotetext{\footnotesize{Partially supported by Icelandic Research Fund grant number 239736-051 and the ERC Consolidator Grant 772466 ``NOISE''.}}

\bigskip

\def\eref#1{(\ref{#1})}
\newcommand{\Prob} {{\bf P}}
\newcommand{\calC}{\mathcal{C}}
\newcommand{\calP}{\mathcal{P}}
\newcommand{\calQ}{\mathcal{Q}}
\newcommand{\calO}{\mathcal{O}}
\newcommand{\calF}{\mathcal{F}}
\newcommand{\Z}{\mathbb{Z}}
\newcommand{\N}{\mathbb{N}}
\newcommand{\HH}{\mathbb{H}}
\newcommand{\Rr}{\mathbb{R}^3}
\newcommand{\h}{\mathcal{H}}
\def\diam{\mathrm{diam}}
\def\length{\mathrm{length}}
\def\ev#1{\mathcal{#1}}
\def\Isom{{\rm Isom}}
\def\Re{{\rm Re}}
\def \eps {\epsilon}
\def \P {{\Bbb P}}
\def \E {{\Bbb E}}
\def \proof {{ \medbreak \noindent {\bf Proof.} }}
\def\proofof#1{{ \medbreak \noindent {\bf Proof of #1.} }}
\def\proofcont#1{{ \medbreak \noindent {\bf Proof of #1, continued.} }}
\def\supp{{\rm supp}}
\def\max{{\rm max}}
\def\min{{\rm min}}
\def\dist{{\rm dist}}
\def\Aut{{\rm Aut}}
\def\id{{\rm id}}
\def\Stab{{\rm Stab}}
\def\T{{\cal T}}
\def\B{{\cal B}}

\newcommand{\lra}{\leftrightarrow}
\newcommand{\xlra}{\xleftrightarrow}
\newcommand{\xnlra}{\xnleftrightarrow}
\newcommand{\pc}{{p_c}}
\newcommand{\pt}{{p_T}}
\newcommand{\ptk}{{\hat{p}_T}}
\newcommand{\pl}{{\tilde{p}_c}}
\newcommand{\pe}{{\hat{p}_c}}
\newcommand{\pr}{\mathrm{\mathbb{P}}}
\newcommand{\pp}{\mu}
\newcommand{\ex}{\mathrm{\mathbb{E}}}
\newcommand{\ee}{\mathrm{\overline{\mathbb{E}}}}

\newcommand{\om}{{\omega}}
\newcommand{\ebd}{\partial_E}
\newcommand{\ivbd}{\partial_V^\mathrm{in}}
\newcommand{\ovbd}{\partial_V^\mathrm{out}}
\newcommand{\q}{q}
\newcommand{\RR}{\mathcal{R}}

\newcommand{\CC}{\Pi}
\newcommand{\BB}{\Pi}

\newcommand{\A}{\mathcal{A}}
\newcommand{\cc}{\mathbf{c}}
\newcommand{\pa}{{PA}}
\newcommand{\degi}{\deg^{in}}
\newcommand{\dego}{\deg^{out}}
\def\Pn{{\bf P}_n}

\newcommand{\R}{\mathbb R}
\newcommand{\F}{F}
\newcommand{\FF}{\mathfrak{F}}
\newcommand{\Can}{\rm Can}
\newcommand{\Vol}{\mathrm Vol}

\def\UST{{\rm UST}}
\def\Gstar{{{\cal G}_{*}}}
\def\Gstarstar{{{\cal G}_{**}}}
\def\Gstarplus{{{\cal G}_{*}^\frown}}
\def\Rel{{\cal R}}
\def\Comp{{\rm Comp}}
\def\calG{{\cal G}}
\def\calM{{\cal M}}
\def\calN{{\cal N}}
\def\omps{{\omega_\delta^\eps}}
\def\FKI{{\phi^f_{G,p,0}}}
\def\FKIn{{\phi_{G_n,p,0}}}
\def\FKIf{{\phi^f_{G,p,0}}}
\def\FKIw{{\phi^w_{G,p,0}}}
\def\loop{{\Prob_{G,x}}}
\def\loopn{{\Prob_{G_n,x}}}

\begin{abstract}
We develop a method to prove that certain percolation processes on amenable random rooted graphs are factors of iid, given that the process is a monotone limit of random finite subgraphs that satisfy a certain independent stochastic domination property. 
Among the consequences is the previously open claim that the Uniform Spanning Forest is a factor of iid for recurrent graphs, and that it arises as a finitary factor.
\end{abstract}

\bigskip

\section{Introduction}

For a given graph $G$, consider an iid labelling of its vertices by uniform $[0,1]$ random variables. 
A factor of iid process on $G$ is a new labelling of the graph where the label of each vertex can be encoded from the iid labels in its neighborhood, and the same coding rule is used for every vertex. We will give a formal definition later.
For example, the Ising model on $\Z^d$ with all-plus boundary conditions is always a factor of iid \cite{A}, but with wired boundary conditions it is a factor of iid only if there is a unique Gibbs measure.
See the Introduction of \cite{ARS} for a list of factor of iid models and references. 
Among the benefits of factor of iid constructions are that they go through when taking local limits of graphs, hence they are well suited for sofic approximation. From the algorithmic point of view, factor of iid constructions can be viewed as simple models of computing with parallel processors, a natural distributed algorithm. As we will see, a hierarchical categorization of random processes has been introduced, based on whether they are factors of iid with further restrictions (the coding using a fixed bounded neighborhood, using a random but bounded neighborhood...).

The Wired Uniform Spanning Forest (WUSF) of a transient graph can be attained as a factor of iid: it follows from \cite{BLPS} that the cycle-popping procedure, which is also underlying ``Wilson's algorithm rooted at infinity'', defines a factor map from an iid labelling to the WUSF. The question of whether the uniform spanning tree on an infinite recurrent $G$ is a factor of iid remained open, apart from the case of Cayley graphs \cite{LT}; see Question 6.1 in \cite{ARS}. Other examples where the study of USF on recurrent graphs have shown to be more difficult than on transient ones include the question of their one-endedness, where this was the last unresolved case one until recently \cite{EH}.

Note that on a recurrent, or more generally, an invariantly amenable graph, the free and the wired Uniform Spanning Forest are the same. To indicate this equality, but keep in mind that we are dealing with infinite graphs, we will denote it by USF for such graphs, even though the USF always has a unique component on recurrent graphs. 

\begin{theorem}\label{recurrentUSF}
Let $G$ be a unimodular random graph that is almost surely recurrent. Then the USF of $G$ is a factor of iid.
\end{theorem}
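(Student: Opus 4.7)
The plan is to apply the general method of the paper: express the $\mathrm{USF}$ on $G$ as a monotone limit of random finite subgraphs satisfying the independent stochastic domination property, whence it is automatically a factor of iid.

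First, since $G$ is recurrent and unimodular, it is invariantly amenable. Using the iid labels I would construct, as a factor of iid, an exhaustion of $G$ by finite vertex sets $F_1 \subset F_2 \subset \cdots$ with $\bigcup_n F_n = V(G)$ --- for instance via a hierarchical factor of iid sparse point process whose Voronoi-like cells cover $G$, the sparsity of the process being decreased at each level so that the cells absorb larger and larger neighborhoods. This uses only the invariant amenability of $G$ and standard factor of iid constructions for Følner-type decompositions.

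Second, on each finite subgraph induced by $F_n$ I consider the wired uniform spanning tree $T_n$, obtained by contracting $V(G)\setminus F_n$ to a single vertex and sampling a UST of the resulting finite multigraph. Such a $T_n$ can be produced deterministically from the iid labels restricted to $F_n$ by a fixed algorithm (e.g., Wilson's algorithm rooted at the contracted vertex, with loop-erasure order derived from the labels). Since $G$ is recurrent, the free and wired USFs coincide with $\mathrm{USF}(G)$, and both equal the weak limit of $T_n$ along any exhaustion; under an appropriate coupling this convergence is monotone in the edge set.

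Third, the critical hypothesis to verify for the paper's general method is the ``independent stochastic domination'': that the monotone coupling from $T_n$ to $T_{n+1}$ can be realized using only the iid labels on $F_{n+1}\setminus F_n$, independently of the labels used to build $T_n$. Once this is in place, the sequence $(T_n)$ becomes a monotone factor of iid sequence converging to $\mathrm{USF}(G)$, and the general theorem of the paper immediately concludes that $\mathrm{USF}$ is a factor of iid.

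I expect the main obstacle to be the verification of the independent stochastic domination. The classical monotonicity of wired USTs along exhaustions does not obviously use only fresh randomness at each step, and producing a coupling of USTs on nested subgraphs in which the transition $T_n\mapsto T_{n+1}$ is driven purely by independent new labels is the delicate analytic content. On transient graphs the cycle-popping construction of Wilson's algorithm rooted at infinity supplies such a coupling directly, but on recurrent graphs it fails to stabilize, which is precisely the gap that the new method of the paper is engineered to fill; my role here is to package the USF approximation into the form demanded by that method.
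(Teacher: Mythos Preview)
Your outline matches the paper's strategy, but there is a genuine gap at precisely the step you flag, compounded by a misstatement of what that hypothesis actually requires.

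The ``independent stochastic domination'' in the paper is a purely distributional condition: for disjoint connected subgraphs $H_1,\ldots,H_k$ of a finite connected $H$, one needs the product measure $\otimes_i\mu_{H_i}$ to dominate (or be dominated by) $\mu_H|_{\cup H_i}$. There is no requirement that the transition coupling use only labels on $F_{n+1}\setminus F_n$; once stochastic domination holds, Strassen's theorem produces a monotone coupling, and one implements it using a \emph{fresh coordinate} $\Xi_n$ of the label space at every vertex of the component. Your stated requirement is an unnecessary strengthening that points you toward a nonexistent difficulty. More to the point, you never actually verify the domination, and that is the entire content of the argument. For the wired UST it is immediate: writing $H_i^w$ for $H_i$ with wired boundary, the graph $(\cup H_i)^w$ arises from $H^w$ by contracting everything outside $\cup H_i$, and UST is stochastically decreasing under edge contraction, whence $\mathrm{UST}(H^w)|_{\cup H_i}\succeq\otimes_i\mathrm{UST}(H_i^w)$. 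This one-line argument (or the dual Rayleigh monotonicity argument for the free version) is exactly what the paper supplies; it is neither delicate nor analytic. A smaller issue: in an invariant construction on an infinite graph your $F_n$ cannot literally be finite sets --- what your Voronoi picture actually delivers is an invariant subgraph $\Gamma_n$ all of whose \emph{components} are finite, and the sampling is done componentwise.
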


The theorem is a part of Corollary \ref{usf_amen}. The full proof is given in Section \ref{sec.usf}.

We mention that the question whether the Free Uniform Spanning Forest is a factor of iid in full generality is open. It is of special importance, because of its connections to the fixed-price conjecture (as explained e.g. in \cite{HP}).

A special class of fiid processes is formed by those where the factor labelling of a vertex can be determined without any error from a large enough neighborhood of that vertex, where the size of the necessary neighborhood possibly depends on the iid labels in it, similarly to how stopping times are defined. Such fiid processes are called finitary factor of iid or ffiid. A further restriction is when the iid labels are not Lebesgue [0,1], but rather we have a constant number of random bits on every vertex. A finitary factor from such an iid labelling is called a finite-valued finitary factor of iid.

\begin{theorem}\label{fvffiid}
(1) The USF on an invariantly amenable unimodular random graph is a finite-valued finitary fiid. \\
(2) The Ising model on $\Z^d$ when there is a unique Gibbs measure is a finite-valued finitary fiid.
\end{theorem}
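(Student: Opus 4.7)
The plan is to deduce both parts from the general construction developed earlier in the paper, which shows that a process arising as the monotone limit of random finite subgraphs satisfying an independent stochastic domination property is a finite-valued finitary factor of iid on amenable random rooted graphs. Since $\Z^d$ is amenable, the task in each part reduces to exhibiting the target process as such a monotone limit on growing boxes $\Lambda_n \uparrow \Z^d$.

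For part (1), I would express the USF on $\Z^d$ as the monotone weak limit of uniform spanning trees $T_{\Lambda_n}$ on exhausting boxes. Amenability of $\Z^d$ ensures that free and wired USF coincide, so the boundary condition on $\Lambda_n$ is immaterial in the limit. Each $T_{\Lambda_n}$ can be sampled by Wilson's algorithm, driven by loop-erased random walks that consume finite-alphabet iid input (one uniform neighbor choice at each step). The passage from $T_{\Lambda_n}$ to $T_{\Lambda_{n+1}}$ is then arranged by running fresh walks rooted at the new vertices of $\Lambda_{n+1}\setminus \Lambda_n$, drawing on iid blocks disjoint from those used at previous scales. This realises the independent stochastic domination hypothesis required by the general theorem and delivers the fvffiid conclusion.

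For part (2), uniqueness of the Gibbs measure means that the finite-volume Ising measures with $+$ boundary conditions decrease, and those with $-$ boundary conditions increase, to the same limit $\mu$. I would sample the two monotone sequences jointly through Holley's coupling fed by iid randomness on each vertex, producing sandwiched configurations $\sigma^-_n \leq \sigma^+_n$. Uniqueness guarantees that for every vertex $v$ the agreement time
\[
\tau(v) \;=\; \min\{\, n : \sigma^+_n(v) = \sigma^-_n(v)\,\}
\]
is almost surely finite, and $\sigma(v) := \sigma^+_{\tau(v)}(v)$ is then a sample from $\mu$. Since $\tau(v)$ is determined by the iid labels in a finite random neighborhood of $v$ and the output takes values in the finite alphabet $\{-1,+1\}$, this gives the desired fvffiid representation, provided that the finite-box samplers can be fitted into the paper's independent stochastic domination framework.

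The principal obstacle in both parts is arranging the finite-volume samplers so that the iid randomness consumed at each successive scale is genuinely independent of the earlier scales, as required by the independent stochastic domination hypothesis rather than mere monotonicity. For the USF this is engineered by feeding Wilson's algorithm inside each annulus $\Lambda_{n+1}\setminus\Lambda_n$ with a fresh block of iid labels; for Ising, the Holley coupling must be organised so that the monotone updates of new spins on each annulus draw from iid input disjoint from the interior, for instance by coupling from the past run with fresh randomness on each new shell. Once this disjointness is set up, both statements follow as direct applications of the general fvffiid theorem: the uniqueness hypothesis enters only through guaranteeing $\tau(v)<\infty$ almost surely in the Ising case, while the amenability of $\Z^d$ is precisely what lets the general theorem upgrade fiid to finite-valued finitary fiid.
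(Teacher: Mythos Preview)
Your proposal conflates \emph{finitary} fiid with \emph{finite-valued} finitary fiid, and as a result misses the entire content of the proof. The sandwiching argument you describe for part~(2) is exactly the proof of Theorem~\ref{finitary_Ising}: it shows that the Ising configuration at $v$ is determined once $\sigma^+_n(v)=\sigma^-_n(v)$ for some finite $n$, hence the coding is finitary. But finitary says nothing about how many random bits were consumed at each vertex along the way. At every scale $n$ the Holley (or Strassen) coupling on the block containing $v$ requires a fresh continuous uniform variable, i.e.\ infinitely many bits, so the construction as you state it is not finite-valued. The same objection applies to part~(1): Wilson's algorithm on $\Lambda_n$ uses one neighbor choice \emph{per random-walk step}, and the number of steps is unbounded; ``finite alphabet per step'' does not yield a uniform bound on the number of bits per vertex. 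There is no ``general fvffiid theorem'' earlier in the paper to appeal to; Theorem~\ref{main} only gives fiid and Theorems~\ref{finitary_Ising}--\ref{finitary_USF} only give finitary fiid.

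What the paper actually does, and what your sketch omits, is a quantitative bit-budgeting argument. One first passes to a subsequence so that the density $\P(o\in\omega_n\setminus\omega)$ decays like $4^{-n}$; then at stage $n$ the set $\omega'_n\setminus\omega_{n+1}$ inside each block is sampled only \emph{approximately}, via Proposition~\ref{generation}, which shows that a distribution supported on subsets of density $\delta$ of a block $H$ can be sampled to within total-variation $\eps$ using roughly $\log(1/\eps)+|H|\delta\log(e/\delta)$ bits. Choosing $\eps=4^{-n-1}$ and combining with a Chebyshev bound makes the expected number of bits used at $o$ in step $n$ of order $n2^{-n}$, hence summable. Finally, Lemma~\ref{lemma3} converts ``finite expected bits at $o$'' into ``uniformly bounded bits at every vertex'' via a stable matching that lets bit-starved vertices borrow from bit-rich ones. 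None of these three ingredients---the approximate sampler, the summable bit count, and the matching redistribution---appears in your outline, and without them the conclusion does not follow.
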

Part (1) is new, while part (2) has been essentially known, as explained to us by Yinon Spinka. The existence of a finitary fiid construction in part (2) essentially follows from \cite{BS}. Meyerovitch and Spinka \cite{MSp} proved that a finite-entropy, countable-valued finitary factor of iid process on an amenable graph can always be constructed as a finite valued factor of iid, which completes (2). Our original manuscript contained a direct proof of the finite-valued ffiid's in Theorem \ref{fvffiid}, but after learning about \cite{MSp}, we removed that argument and only keep the part for (1) that ensures a finitary construction. See also Remark \ref{history_finitary}.


Section \ref{definitions} contains the standard definitions that we will rely on.
In Section \ref{sec.usf} we present our definition of a ``compatible monotone limit'', show that certain models satisfy it, and prove how it implies the existence of fiid codings.

\section{Definitions}\label{definitions}

Denote the set of locally finite graphs with a distinguished root vertex and up to rooted isomorphisms by ${\cal G}_{*}$. Let ${\cal G}_{**}$ be the set of locally finite graphs with a distinguished ordered pair of vertices, up to isomorphisms preserving the ordered pair. 
A {\it unimodular random graph (URG)} is a probability measure $\mu$ on ${\cal G}_{*}$ satisfying
\begin{equation}
\int \sum_{x\in V(G)} f(G,o,x) d\mu ((G,o))=\int \sum_{x\in V(G)} f(G,x,o) d\mu ((G,o))
\label{eq:MTP}
\end{equation}
for every Borel $f:{\cal G}_{**}\to [0,\infty]$. This latter equation is usually referred to as the Mass Transport Principle. See \cite{AL} or \cite{Cu} for more on URG's.
By a slight abuse of terminology we will often refer to the random graph $(G,o)$ as a URG, and also, in general we will sometimes refer to probability measures on graphs as random graphs.
Also, we may hide the root in our notation and call $G$ a URG.

One can decorate the vertices and edges of a URG with some marking (elements of some metric space $X$, often finite), and then it is possible to represent a percolation (i.e., random subgraph), coloring, collection of subgraphs etc. of a URG by a marked version of the URG. We say that a percolation or other decoration of the URG $G$ is {\it invariant}, if this marked graph also satisfies \eqref{eq:MTP} in the extended space of marked graphs. When the URG is supported on some fixed quasi-transitive unimodular graph $G$ (such as a Cayley graph) with a random marking, then this definition of invariance is equivalent to the usual invariance under the automorphisms of $G$.
 
Let $G=(V,E)$ be a unimodular quasi-transitive graph, and denote by $2^G$ the set of its subgraphs. 
We will consider {\it random subgraphs} that are either edge sets or vertex sets of $G$, and think about it as a random object and as a probability measure as well. 
If $(X_v)_{v\in V}$ is a collection of iid Lebesgue[0,1] random labels
on the vertices and $\mu$ is a random subgraph of $G$, say that $\mu$ is a {\it factor of iid (fiid)}, if there is a {\it coding} map (also called an {\it fiid rule}) $\phi$ from
$[0,1]^V$ to $2^G$ that is measurable and equivariant with the automorphisms of $G$, and such that the push-forward of the iid Lebesgue labels by $\phi$ is $\mu$. A more hands-on way to put this definition is that for any $\eps>0$ there is an $R$ such that by looking at the $X_v$ in the $R$-neighborhood of $o$ we can tell whether $o$ and the edges incident to it should be in the random subgraph given by $\mu$, up to an error of probability less than $\eps$, and in telling so we only need information in this $R$-ball up to rooted isomorphisms. This latter definition can be applied to any unimodular random graph without the assumption of quasi-transitivity. If one can almost surely stop at an $R$ such that the status of $o$ and its edges can be determined without any error, and that only information in the $R$-ball is used to determine that we stop, 
then we call $\mu$ a {\it finitary factor of iid}. Finally, if instead of Lebesgue[0,1] we label every vertex by $k$ iid Bernoulli($1/2$) random variables (``random bits'') for some uniform finite $k$, then the existence of a finitary fiid coding means that $\mu$ is a {\it finite valued finitary fiid (fv fiid)}.

A unimodular random graph is {\it invariantly amenable} (also called hyperfinite) if there exists 
an increasing sequence of induced subgraphs $\Gamma_n$ of $G$ ($n=1,2\ldots$) whose union is $G$, the collection $(G,\Gamma_1,\Gamma_2,\ldots)$ is jointly unimodular, and such that all components of every $\Gamma_n$ are finite. We call such a sequence a {\it hyperfinite exhaustion}. See \cite{AL} for proofs of why this is equivalent to alternative common definitions. As proved in Lemma 2.3 of \cite{T}, if $G$ is invariantly amenable, then one is also able to construct the above $\Gamma_n$ as a factor of iid. The proof also implies the following.

\begin{lemma}\label{hyperfinite_fiid}
If $G$ is an invariantly amenable unimodular random graph, then there exists a sequence of induced subgraphs $\Gamma_n$ such that every component of $\Gamma_n$ is finite, $\Gamma_n\subset\Gamma_{n+1}$, $G=\cup\Gamma_n$, and $\Gamma_n$ is a finitary factor of iid for every $n$.
\end{lemma}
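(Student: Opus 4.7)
The plan is to revisit the fiid construction used for Lemma 2.3 of \cite{T} and verify that it can be taken to be finitary. That construction produces, at each scale $n$, an invariant partition of $V(G)$ into almost surely finite pieces via a local rule: one designates a sparse random set of ``seed'' vertices (for instance those whose iid label falls below a threshold $p_n\to 0$), assigns each other vertex to a nearby seed in a Voronoi-type manner (with ties broken by comparing labels), and arranges nestedness by using a decreasing hierarchy of thresholds so that scale-$(n{+}1)$ cells are coarsenings of scale-$n$ cells.

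The first step would be to make precise that this rule is local, in the sense that membership of a vertex in its cell is determined by the labels within some random but a.s.\ finite neighborhood. Fix a root $o$ and a scale $n$, and let $C_n(o)$ denote the component of $o$ in $\Gamma_n$. Since $C_n(o)$ is a.s.\ finite, there is an a.s.\ finite $D$ with $C_n(o)\subset B(o,D)$. For each vertex $v$ in $C_n(o)$ or on its outer boundary, the distance to the nearest scale-$n$ seed is a.s.\ finite. Taking $R_n(o)$ to be $D$ plus the maximum of those finitely many nearest-seed distances, the restriction $X|_{B(o,R_n(o))}$ determines $C_n(o)$ as well as the status of the edges of $\Gamma_n$ incident to $o$; moreover $R_n(o)$ itself is measurable with respect to this same restriction. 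This exhibits $\Gamma_n$ as a finitary fiid.

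The hard part will be to certify the boundary of $C_n(o)$, i.e.\ to confirm within a finite neighborhood that no vertex outside $C_n(o)$ belongs to the same cell. In the Voronoi-type description this amounts to an iterated certification of the boundaries of finitely many neighboring cells, each of which is itself a.s.\ finite. Invariant amenability enters essentially here: it guarantees not only that all cells are a.s.\ finite but also that the cumulative radius needed to certify all relevant boundaries is a.s.\ finite, so that the iterated exploration terminates in finite time. The nesting property $\Gamma_n\subset\Gamma_{n+1}$ and the coverage $\cup\Gamma_n=G$ then follow from the choice of the hierarchy of thresholds $p_n\to 0$ sufficiently slowly, which is inherited directly from the construction in \cite{T} without affecting finitariness.
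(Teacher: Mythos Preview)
Your plan---revisit the construction in \cite{T} and verify that each $\Gamma_n$ is determined by a finite (random) neighbourhood of the root---is exactly what the paper does: it offers no independent proof beyond the remark that the argument in \cite{T} ``also implies'' the finitary statement.

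The execution, however, has two genuine gaps. First, lowering the seed threshold does not by itself produce nesting: with $p_{n+1}<p_n$, if you redo the Voronoi assignment vertex by vertex against the sparser seed set, a vertex $v$ can have its nearest $p_n$-seed and its nearest $p_{n+1}$-seed lying in \emph{different} $p_{n+1}$-cells, so the scale-$(n{+}1)$ partition need not coarsen the scale-$n$ one; you would have to merge whole scale-$n$ cells rather than reassign individual vertices. Second, and more importantly, you have mislocated the role of amenability. Finiteness of the Voronoi cells and of the certification radius already follow from positive i.i.d.\ seed density on \emph{any} locally finite connected graph---no amenability is needed for that, so your third paragraph is resolving a non-problem. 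The place amenability is genuinely required is the exhaustion $\bigcup_n\Gamma_n=G$: on a nonamenable URG no invariant increasing union of finite-component subgraphs can cover $G$, so no hierarchy of thresholds $p_n\to 0$, however slowly chosen, can make a seed-and-merge scheme succeed there. Saying this is ``inherited directly from the construction in \cite{T}'' is not an argument, because the scheme you have actually written down nowhere takes amenability as an input, and hence cannot coincide with whatever construction in \cite{T} does use it.
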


If $H$ is a subgraph of $K$, and $\mu$ is a probability measure on $2^K$, then $\mu|_H$ is the probability measure corresponding to the random graph given by $\mu$ restricted to $H$. If $H_1,\ldots, H_k$ are pairwise vertex-disjoint graphs and $\mu_i$ is the distribution of a random subgraph of $H_i$, then denote by $\otimes\mu_{i}$ the union of the random graphs sampled independently by the $\mu_i$ over $i\in\{1,\ldots,k\}$.



From now on $G=(G,o)$ will always denote an invariantly amenable unimodular random graph (URG). For convenience, we will assume that the iid labels are coming from a suitable product set 
$\Xi=\Xi_0\times\Xi_1\times\ldots$ with some product probability measure on it. 

\section{Monotone weak limits as factor of iid}\label{sec.usf}


\begin{definition}\label{monotone}
Let $\mu$ be a random subgraph of $G$. 
Say that $\mu$ is a {\it compatible monotone decreasing limit} if for every finite connected graph $H$ one can define a random subgraph $\mu_H$ of $H$ in such a way that 
\begin{enumerate}
\item whenever $G_n$ is a finite exhaustion of $G$, $\mu_{G_n}$ weakly converges to $\mu_G$;
\item if $H_1,\ldots, H_k$ are pairwise vertex-disjoint connected subgraphs of the finite connected $H$, then $\otimes\mu_{H_i}$ stochastically dominates $\mu_{H}|_{\cup H_i}$;
\item $\mu_H$ depends only on the isomorphism class of $H$ in the sense that if $H$ and $H'$ are subgraphs and $\iota:H\to H'$ is an isomorphism, then the pushforward of $\mu_H$ by $\iota$ is $\mu_{H'}$.
\end{enumerate}
We define a {\it compatible monotone increasing limit} similarly, with the direction of stochastic domination changed in (2). A {\it compatible monotone limit} is a compatible monotone decreasing or monotone increasing limit.
\end{definition}

\begin{example}
The Free Uniform Spanning Forest (FUSF) of a URG is an example of a compatible monotone decreasing limit. This is clear when $k=1$ in the above definition, from the standard proof of the existence of these forests, using monotone couplings based on Rayleigh's monotonicity law. For $k>1$, one can add paths between the $H_i$ to connect them in a tree-like way, and apply the previous argument to this connected subgraph of $H$.
\end{example}

\begin{example}
The WUSF is a compatible monotone increasing limit on any graph. For this, we will use a technical modification of Definition \ref{monotone}, namely, we require 2. to hold only when the minimal distance between the $H_i$ is at least 2 and $H\setminus\cup H_i$ is connected. (This more general definition could be used in each of our applications without any complication; we prefered to stick to the more natural one in Definition \ref{monotone}, because in most cases it is sufficient.) Denote by $(\cup H_i)^w$ the graph where we replace every edge of $G$ having a single endpoint $x$ in some $H_i$ by an edge $\{x,w\}$, where $w$ is an extra vertex added to $\cup H_i$, and define $H_i^w$ to be $H_i$ with a wired boundary. Then, with notation as in Definition \ref{monotone}, 
$$\UST (H^w)|_{\cup H_i}\succeq \UST ((\cup H_i)^w)|_{\cup H_i}=\otimes \UST (H_i^w),
$$
where the stochastic domination follows because $H^w$ can be obtained from $(\cup H_i)^w$ by contracting edges outside of $\cup H_i$.
\end{example}

\begin{example}
Consider the Ising model with the all-plus boundary condition, and the random subgraph given by vertices with plus spin. This is a compatible monotone decreasing limit, as follows from the fact that on a fixed finite graph the Ising model with any boundary condition is stochastically dominated by that with the plus boundary condition. Likewise, the Ising model with minus boundary conditions is a compatible monotone increasing limit. 
\end{example}

\begin{example}
Similarly to the previous example, Fortuin-Kasteleyn random cluster models of parameter $q\geq 1$ with free (respectively: wired) boundary condition are compatible monotone decreasing (respectively: increasing) limits. 
\end{example}

\begin{theorem}\label{main}
Let $G$ be an invariantly amenable URG. Suppose that the random subgraph $\mu_G$ is a monotone weak limit. Then $\mu_G$ is a factor of iid.
\end{theorem}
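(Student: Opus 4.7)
The plan is to realize $\mu_G$ as an almost sure monotone limit of finitary factor-of-iid processes living on the components of a hyperfinite exhaustion; I treat the decreasing case, the increasing case being symmetric. By Lemma \ref{hyperfinite_fiid}, fix a hyperfinite exhaustion $\Gamma_1\subset\Gamma_2\subset\cdots$ of $G$ that is itself a finitary fiid, and think of the factors $\Xi_0,\Xi_1,\ldots$ of the iid label at each vertex as a countable reservoir of independent randomness, with $\Xi_0$ used to produce the exhaustion and $\Xi_n$ reserved for step $n$ below.

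I will inductively build random subgraphs $\eta_n\subset\Gamma_n$, each an equivariant factor of the iid labels, so that $\eta_n$ has distribution $\bigotimes_{C}\mu_C$ over the components $C$ of $\Gamma_n$ and $\eta_{n+1}|_{\Gamma_n}\subseteq\eta_n$ almost surely. For the base case, on each finite component $C$ of $\Gamma_1$ I use the labels $\{\Xi_1(v):v\in C\}$ to sample from $\mu_C$; a.s.\ distinctness of these labels yields a canonical linear order of $V(C)$, making the sampler invariantly definable. For the inductive step, fix a component $H$ of $\Gamma_{n+1}$ whose subcomponents in $\Gamma_n$ are $H_1,\ldots,H_k$: these are pairwise disjoint connected subgraphs of the finite connected $H$, so condition (2) of Definition \ref{monotone} gives $\bigotimes_i\mu_{H_i}\succeq\mu_H|_{\cup H_i}$, and Strassen's theorem produces a monotone coupling $\pi_H$ supported on pairs $(A,B)$ with $B\subseteq A$, $A\sim\bigotimes_i\mu_{H_i}$, and $B\sim\mu_H|_{\cup H_i}$. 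Using the labels $\{\Xi_{n+1}(v):v\in H\}$, I sample $\eta_{n+1}|_{\cup H_i}$ from $\pi_H(\,\cdot\mid A=\eta_n|_{\cup H_i})$ and then extend to $\eta_{n+1}|_H$ by drawing from $\mu_H$ conditional on this restriction; doing this independently over the components of $\Gamma_{n+1}$ yields $\eta_{n+1}$ with the required law and containment.

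The limit $\eta_\infty:=\bigcap_n\eta_n$ is well defined by the monotone containment and is a factor of the iid labels, hence fiid. To identify its law, working component by component I may assume $G$ is connected. Fix any finite $S\subset V(G)\cup E(G)$ and any $v\in V(G)$; for all sufficiently large $n$ the component $C_n(v)$ of $v$ in $\Gamma_n$ contains $S$, so by construction $\eta_n|_S$ has law $\mu_{C_n(v)}|_S$. The sets $C_n(v)$ form a finite connected exhaustion of $G$, so condition (1) of Definition \ref{monotone} gives $\mu_{C_n(v)}|_S\Rightarrow\mu_G|_S$, while the monotone containment gives $\eta_n|_S\downarrow\eta_\infty|_S$ almost surely; together these force $\eta_\infty|_S\sim\mu_G|_S$, and since $S$ was arbitrary, $\eta_\infty\sim\mu_G$.

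The main obstacle is executing the Strassen step equivariantly: the theorem only asserts existence of a monotone coupling, so the proof must select, for each isomorphism class of data $(H,H_1,\ldots,H_k)$, a specific conditional law of $B$ given $A$ that varies measurably and respects graph isomorphisms. This is tractable because each step takes place inside a single finite component, where the continuous labels $\Xi_{n+1}(v)$ supply both a canonical vertex-ordering and the randomness needed to invert any prescribed finite CDF; symmetrizing over automorphisms of the abstract isomorphism class (weighted by the label-induced ordering) then yields a choice independent of any non-canonical representative. A secondary verification is that the base step and the extensions inside $H$ to vertices outside $\cup H_i$ are equivariant as well, which follows from the same inverse-CDF construction using fresh labels.
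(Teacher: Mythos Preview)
Your argument is essentially the paper's own proof: build the hyperfinite exhaustion as a (finitary) fiid, sample $\mu_H$ on each component of $\Gamma_1$, and at each later stage use Strassen's theorem on the stochastic domination in Definition~\ref{monotone}(2) to couple the product over the old components monotonically into a sample of $\mu_H$ on each new component $H$, so that every edge changes status at most twice and the limit is fiid. Your write-up is in fact a bit more explicit than the paper's about why the limit has law $\mu_G$ (via Definition~\ref{monotone}(1) applied to the exhaustion $C_n(v)$) and about how to make the Strassen step equivariant.

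One notational slip to fix: you set $\eta_\infty:=\bigcap_n\eta_n$, but since $\eta_n\subset\Gamma_n$ this literal intersection is contained in $\Gamma_1$ (an edge outside $\Gamma_1$ is never in $\eta_1$). What you want, and what you actually use later when writing $\eta_n|_S\downarrow\eta_\infty|_S$, is the pointwise limit: for each edge $e$ define $\eta_\infty(e):=\lim_n\eta_n(e)$, which exists because once $e\in\Gamma_N$ the sequence $(\eta_n(e))_{n\ge N}$ is non-increasing. Equivalently, $\eta_\infty=\bigcap_N\bigcup_{n\ge N}\eta_n$. With that correction the proof is complete and matches the paper's.
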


\begin{proof}
We prove the claim for a compatible monotone decreasing limit. The monotone increasing case would work the same way.

Consider a factor of iid sequence of subgraphs $\Gamma_n$ of $G$ ($n=1,2\ldots$) such that $\Gamma_n\subset\Gamma_{n+1}$, their union is $G$, and every component of any $\Gamma_n$ is finite. Such a sequence exists by Lemma 2.3 of \cite{T} (Lemma \ref{hyperfinite_fiid}). Use the labels from $\Xi_0$ to define this.

For every finite $H$ fix some measurable map $\phi_1$ from $\Xi_1^H$ to the subgraph of $H$, in such a way that the iid measure on the labels from $\Xi_1$ is mapped into the measure $\mu_H$ as in Definition \ref{monotone}, and such that $\phi_1$ commutes with the automorphisms of $H$. 
For each component $H$ of $\Gamma_1$ assign the random $\omega_H$ using $\phi_1$ on $H$, and let $\omega_{\Gamma_1}$ be the union of these disjoint subgraphs $\omega_H$ over the $H$. Suppose that $\omega_{\Gamma_{n-1}}$ has been defined, it is a random subgraph of $\Gamma_{n-1}$. For every finite graph $H$ and $H_1,\ldots, H_k$ pairwise disjoint subsets of $H$, define a monotone coupling between $\omega_H$ and $\cup\omega_{H_i}$. Such a coupling exists by (2) and Strassen's theorem. So we can randomly remove some of the edges of $\cup\omega_{H_i}$, and add some edges of $H\setminus \cup E({H_i})$, and get a subgraph of distribution $\omega_H$ in $H$. Now for every finite $H$, conditional on $\cup\omega_{H_i}$, fix some rule of removing and adding the edges as some function from $\Xi_n^H$ (we can call $\phi_n$ the function describing this rule). This rule should be the same for isomorphic collections $(H,H_1,\dots,H_k)$.
Apply this rule to each of the components of $\Gamma_n$, to define $\omega_{\Gamma_n}$ as a factor of iid.

Every edge can change status at most twice during this procedure: the first change may happen if it becomes part of $\omega_{\Gamma_n}$ for some $n$, and the second one may happen if it gets removed in some later stage (and then it stays like that). Hence the almost sure limit $\omega_G$ of $\omega_{\Gamma_n}$ is a factor of iid.
\qed
\end{proof}

\medskip

\begin{corollary}\label{usf_amen}
The USF of an invariantly amenable URG is a factor of iid. In particular, this holds when the URG is almost surely recurrent.
\end{corollary}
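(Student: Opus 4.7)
The plan is to deduce the corollary as a direct consequence of Theorem \ref{main} combined with the compatible monotone limit examples established just after Definition \ref{monotone}. First I would recall from the introduction that on an invariantly amenable URG the free and the wired uniform spanning forests coincide, so that USF is unambiguously defined as a single measure. The first example after Definition \ref{monotone} then identifies FUSF (and hence USF) as a compatible monotone decreasing limit on any URG: Rayleigh's monotonicity law gives the required stochastic domination when $k=1$, and the $k>1$ case reduces to this by joining the $H_i$ with paths inside $H$. Theorem \ref{main} applied with $\mu_G$ equal to the USF now yields the fiid coding, proving the first sentence of the corollary.

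For the ``in particular'' clause I would invoke the standard fact (already alluded to in the introduction) that any almost surely recurrent URG is invariantly amenable. One way to see this is that recurrence of the simple random walk forces it to return infinitely often to any finite set, so a mass-transport computation on the traces of finite walk segments produces a hyperfinite exhaustion of $G$. Granted invariant amenability, the first part of the corollary applies verbatim and we are done.

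There is no substantive obstacle beyond bookkeeping, since all the heavy lifting has been absorbed into Theorem \ref{main} and the FUSF example. The only small point worth double-checking is the coincidence FUSF = WUSF on invariantly amenable graphs; equivalently, one could instead invoke the second example and apply Theorem \ref{main} to WUSF as a compatible monotone increasing limit, which gives the same conclusion via the dual direction of stochastic domination.
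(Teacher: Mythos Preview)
Your proposal is correct and matches the paper's approach: the corollary is stated in the paper without a separate proof, as it is an immediate consequence of Theorem~\ref{main} applied to the FUSF example (or, as you note, equivalently the WUSF example), together with the remark from the introduction that recurrent URGs form a subclass of invariantly amenable ones. Your mass-transport sketch for the implication ``recurrent $\Rightarrow$ invariantly amenable'' is a bit vague as written, but since the paper simply takes this inclusion as known, no further justification is required here.
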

The last claim was not known, see Question 6.1 by Angel, Ray and Spinka in \cite{ARS}. 


\begin{remark}\label{remark7}
The construction in the proof is {\it $\Gamma_n$-local}, by which we mean that conditioned on $\Gamma_n$, for every component $K$ of $\Gamma_n$ we defined $\omega_{\Gamma_n}|_K$ independently from the others, and using only the iid labels in $K$.
\end{remark}

\section{Finitary fiid's for amenable graphs}\label{section_finitary}

\begin{theorem}\label{finitary_Ising}
Let $G$ be an invariantly amenable URG, and suppose that there is a unique Gibbs measure for the Ising model. Then it is a finite valued finitary factor of iid.
\end{theorem}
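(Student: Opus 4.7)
The plan is to run the construction of Theorem \ref{main} simultaneously for the plus-boundary Ising model (a compatible monotone decreasing limit) and the minus-boundary Ising model (a compatible monotone increasing limit), and to exploit uniqueness of the Gibbs measure to read off a finite stopping rule at each vertex. I would build a joint coupling $(\omega^+_{\Gamma_n}, \omega^-_{\Gamma_n})$ along the finitary fiid hyperfinite exhaustion $\Gamma_n$ from Lemma \ref{hyperfinite_fiid}, with the properties that on every finite component $K$ of $\Gamma_n$ the marginals are the plus- and minus-boundary Ising on $K$ (viewed as the set of plus-spin vertices), the pointwise inclusion $\omega^+_{\Gamma_n}|_K \supseteq \omega^-_{\Gamma_n}|_K$ holds at every stage, and the inter-stage transitions are the usual monotone couplings of Theorem \ref{main} --- decreasing for $\omega^+$ and increasing for $\omega^-$. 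One concrete way to obtain this joint coupling is via the Fortuin--Kasteleyn random cluster representation with $q=2$: sample the wired FK measure as an fiid through Theorem \ref{main} (it is a compatible monotone increasing limit), then form $\omega^+$ and $\omega^-$ by coloring the wired cluster $+$ and $-$ respectively and giving shared iid $\pm 1$ labels to the remaining clusters.

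For every vertex $v$, $\omega^+_{\Gamma_n}(v)$ is non-increasing and $\omega^-_{\Gamma_n}(v)$ is non-decreasing in $n$, so each converges almost surely to a limit $\omega^+(v) \ge \omega^-(v)$. Under uniqueness of the Gibbs measure, both $\omega^+$ and $\omega^-$ have the unique Gibbs law as their distribution, so $\omega^+(v)$ and $\omega^-(v)$ have identical marginals; combined with the pointwise inequality this forces $\omega^+(v) = \omega^-(v)$ a.s.\ for each $v$, and simultaneously for all vertices by countable additivity. Consequently the random index $N(v) := \min\{ n : \omega^+_{\Gamma_n}(v) = \omega^-_{\Gamma_n}(v)\}$ is a.s.\ finite, and the common value $\omega^+_{\Gamma_{N(v)}}(v)$ is the true Ising spin at $v$: once the two configurations agree at $v$ they remain in agreement forever, since $\omega^-_{\Gamma_m} \subseteq \omega^+_{\Gamma_m}$ for all $m$ and the two sequences squeeze together at $v$.

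To turn this into a finitary fiid coding I would invoke the $\Gamma_n$-locality from Remark \ref{remark7}: each $\omega^\pm_{\Gamma_n}(v)$ depends only on the $\Gamma_n$-component of $v$ and the iid labels in it. Since every $\Gamma_n$ is a finitary fiid by Lemma \ref{hyperfinite_fiid}, the entire $\Gamma_n$-component of $v$ can be traced out using only the labels in a finite random ball around $v$ (apply the finitary rule for $\Gamma_n$ iteratively outward from $v$, stopping when the explored boundary has no $\Gamma_n$-edges leaving it). The coding at $v$ then reads: for $R = 1, 2, \dots$, compute from the labels in $B(v,R)$ as many stages $n$ of the joint construction as are fully determinable; stop at the first $n$ for which $\omega^+_{\Gamma_n}(v) = \omega^-_{\Gamma_n}(v)$, and output that value. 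Termination at a finite $R$ a.s.\ is guaranteed by $N(v) < \infty$ and the a.s.\ finiteness of each $\Gamma_n$-component.

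The main obstacle is to realize the joint coupling in the first step: simultaneously maintaining the pointwise ordering $\omega^+ \supseteq \omega^-$ and the two inter-stage monotonicities, in an invariant and factor-of-iid way. A direct Strassen construction must produce a four-variable coupling of $(\omega^+_{\Gamma_{n-1}}, \omega^-_{\Gamma_{n-1}}, \omega^+_{\Gamma_n}, \omega^-_{\Gamma_n})$ respecting a product partial order, which is not automatic. The FK route localizes everything to a single monotone fiid (the FK measure) plus an auxiliary iid coloring, but one still has to verify that as the $\Gamma_n$-components merge across stages and FK clusters coalesce, the induced plus- and minus-Ising couplings retain the decreasing and increasing inter-stage monotonicities required by Theorem \ref{main}.
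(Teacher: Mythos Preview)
Your approach is essentially the paper's: sandwich the Ising measure between the plus- and minus-boundary versions along the finitary hyperfinite exhaustion $\Gamma_n$ of Lemma~\ref{hyperfinite_fiid}, maintain $\omega^+_{\Gamma_n}\supseteq\omega^-_{\Gamma_n}$, and stop at each vertex the first time the two agree. The paper's proof is exactly this, stated tersely: it records the chain
\[
\omega_H^+\;\succeq\;\omega_K^+|_H\;\succeq\;\omega_K^-|_H\;\succeq\;\omega_H^-
\]
for $H\subset K$ finite, and asserts that at each inductive step one can remove vertices from $\cup_{H\in\calP_n}\omega_H^+$ and add vertices to $\cup_{H\in\calP_n}\omega_H^-$ so as to obtain $\cup_{K\in\calP_{n+1}}\omega_K^+\supseteq\cup_{K\in\calP_{n+1}}\omega_K^-$, with the update rule chosen uniformly over isomorphism classes of tuples $(K,\{H\in\calP_n:H\subset K\})$. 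The $\Gamma_n$-locality you invoke (Remark~\ref{remark7}) is exactly what the paper uses.

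Where you diverge is in the treatment of the joint coupling. You flag the four-variable coupling as the main obstacle and propose going through the FK representation instead. The paper does not do this: it simply appeals to the chain of stochastic dominations above and Strassen, implicitly asserting that given the stage-$n$ monotone pair one can insert the two stage-$(n+1)$ configurations in between. Your FK detour is both unnecessary and, as you yourself note, problematic: when the wired FK configuration increases from stage $n$ to $n+1$ and a formerly non-wired cluster merges into the wired one, all its vertices become $+$ in $\omega^+$, which is an \emph{increase} rather than a decrease---so the induced $\omega^+_{\Gamma_n}$ is not monotone decreasing. There is also no canonical way to assign the shared $\pm 1$ cluster labels consistently as clusters coalesce. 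A cleaner concrete realisation of the four-way coupling (should you want one beyond the paper's Strassen-for-a-chain assertion) is the grand monotone coupling of Ising measures via Glauber dynamics with shared update variables on each finite component of $\Gamma_n$: this produces all of $\omega_{H_i}^\pm$ and $\omega_K^\pm$ simultaneously with every required pointwise inclusion, and is finitary and $\Gamma_n$-local.
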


\begin{remark}\label{history_finitary}
The previous theorem has been known, we only present an alternative proof. 
The subcritical case has been settled by van den Berg and Steif in \cite{BS} (and strengthened in \cite{Sp2} where finite expected coding volume is proved), even with a choice of a finite-valued ffiid. They also showed, based on the method of \cite{MS}, that in the phase of multiple Gibbs measures there is no finitary coding. From \cite{Y}, \cite{ADS} and \cite{AF} we know that the Ising model on $\Z^d$ has a unique Gibbs measure at critical temperature, and the method in \cite{BS} applies to that case to get a finitary fiid, which then can be chosen to be finite valued by \cite{MSp}. 
\end{remark}

\begin{proof}
Consider the finitary fiid hyperfinite exhaustion $\Gamma_n$ as in Lemma \ref{hyperfinite_fiid}. Let $\calP_n$ be the set of connected components of $\Gamma_n$.

For a given finite subgraph $H\subset G$ consider the Ising model on $H$ with all-plus boundary conditions (respectively: all-minus boundary conditions), and define the vertices that got a positive spin to be $\omega_H^+$ (respectively: $\omega_H^-$). Then, for $H\subset K$ finite subgraphs, we have 
$$\omega_H^+\succeq \omega_K^+|_H \succeq \omega_K^-|_H \succeq \omega_H^-,$$
so similarly to the proof of Theorem \ref{main}, if a monotonous coupling of
$\cup_{H\in\calP_n}\omega_H^+$ and $\cup_{H\in\calP_n}\omega_H^-$ have been constructed (so that $\cup_{H\in\calP_n}\omega_H^+\supset\cup_{H\in\calP_n}\omega_H^-$), then one can remove vertices from $\cup_{H\in\calP_n}\omega_H^+$ and also add vertices from $\Gamma_{n+1}\setminus\Gamma_n$
to get $\cup_{K\in\calP_{n+1}}\omega_K^+$
and add vertices to $\cup_{H\in\calP_n}\omega_H^-$ to get $\cup_{K\in\calP_{n+1}}\omega_K^-$ in a way that $\cup_{K\in\calP_{n+1}}\omega_K^+\subset\cup_{K\in\calP_{n+1}}\omega_K^-$, 
and do this for each $K$ following some fixed fiid rule being the same for isomorphic tupples $(K,\{H\subset K, H\in\calP_n\})$. By step $n$ the construction is finitary, because $\calP_n$ is finitary and we only have to use information from within the element $H$ of $\calP_n$ that contains $o$ to be able to tell the status of $o$ in $\omega_H^+$ and in $\omega_H^-$. At some finite $n$, $\omega_H^+(o)=\omega_H^-(o)$, and from this point on it will remain unchanged. 

The assertion that there also exists a finite valued coding follows from finitariness by \cite{MSp}.
\qed
\end{proof}

\begin{theorem}\label{finitary_USF}
Let $G$ be an invariantly amenable URG. Then the USF
is a finite valued finitary factor of iid. 
\end{theorem}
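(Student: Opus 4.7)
The plan is to imitate the proof of Theorem \ref{finitary_Ising}, exploiting that on an invariantly amenable URG one has $\mathrm{FUSF}=\mathrm{WUSF}=\mathrm{USF}$. Consequently USF arises simultaneously as a compatible monotone decreasing limit (via free-boundary UST approximations, as in the FUSF example in Section \ref{sec.usf}) and as a compatible monotone increasing limit (via wired-boundary UST approximations, as in the WUSF example). These two families will play the roles of the all-plus and all-minus Ising approximations in Theorem \ref{finitary_Ising}.

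Fix a finitary fiid hyperfinite exhaustion $\Gamma_n\subset\Gamma_{n+1}$, $\cup\Gamma_n=G$ from Lemma \ref{hyperfinite_fiid}, and let $\calP_n$ be its set of components. For each $H\in\calP_n$ let $\omega_H^f$ be the UST of $H$ (free boundary) and $\omega_H^w$ the restriction to $H$ of the UST of $H^w$ (wired boundary, as defined in the WUSF example). Since on any finite graph WUSF is stochastically dominated by FUSF (by Rayleigh / the Kirchhoff edge formula), these two can be monotonically coupled with $\omega_H^f\supseteq\omega_H^w$. Inductively in $n$, I would then build a joint coupling of $(\omega^f_{\Gamma_n},\omega^w_{\Gamma_n})$, $\calP_n$-local in the sense of Remark \ref{remark7} and fiid in a fresh block of iid labels $\Xi_n$, such that almost surely $\omega^f_{\Gamma_n}\supseteq\omega^w_{\Gamma_n}$, $\omega^f_{\Gamma_{n+1}}|_{\Gamma_n}\subseteq\omega^f_{\Gamma_n}$, and $\omega^w_{\Gamma_{n+1}}|_{\Gamma_n}\supseteq\omega^w_{\Gamma_n}$. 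Existence of such a simultaneous coupling on each $\Gamma_{n+1}$-component $K$ follows, by a standard Strassen argument in the product order, from combining three stochastic inequalities: the FUSF-compatibility $\otimes_i\omega^f_{H_i}\succeq\omega^f_K|_{\cup H_i}$, the WUSF-compatibility $\otimes_i\omega^w_{H_i}\preceq\omega^w_K|_{\cup H_i}$, and $\omega^f_K\succeq\omega^w_K$ on $K$ itself, exactly as in the Ising proof.

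With the coupling in place, weak convergence of the two monotone sequences yields $\omega^f_{\Gamma_n}\to\mathrm{FUSF}=\mathrm{USF}$ and $\omega^w_{\Gamma_n}\to\mathrm{WUSF}=\mathrm{USF}$. For any fixed edge $e$, the indicators $\mathbf{1}[e\in\omega^f_{\Gamma_n}]$ and $\mathbf{1}[e\in\omega^w_{\Gamma_n}]$ are monotone in $n$, sandwiched with the former on top, and share the same limiting marginal; so they must coincide from some a.s. finite $n_0(e)$ on, with common value $\mathbf{1}[e\in\mathrm{USF}]$. At stage $n_0$ the status of $e$ depends only on the iid labels inside the $\Gamma_{n_0}$-component containing $e$, together with the finitary-fiid labels revealing $\Gamma_{n_0}$, which yields the desired finitary fiid coding of USF.

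The main obstacle I expect is that the WUSF example in Section \ref{sec.usf} uses a technical variant of Definition \ref{monotone} requiring pairwise distance at least two between the $H_i$ and connectedness of $H\setminus\cup H_i$, and these are not automatically satisfied by the exhaustion of Lemma \ref{hyperfinite_fiid}. I would handle this by a finitary-fiid thinning of the exhaustion (shaving a one-vertex collar off each component of $\Gamma_n$ and adjoining a finitary-fiid-chosen ``bridge'' inside each $\Gamma_{n+1}$-component to restore connectedness of the complement), so that the modified exhaustion satisfies the WUSF hypothesis while remaining a finitary fiid hyperfinite exhaustion. Once that is done, everything above goes through, and the finitariness is automatic from the squeeze between the increasing and decreasing sequences.
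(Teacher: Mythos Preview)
Your approach is essentially the paper's second proof: use $\mathrm{FUSF}=\mathrm{WUSF}$ on invariantly amenable URG's, run the free-boundary approximations as a compatible monotone decreasing sequence and the wired-boundary approximations as a compatible monotone increasing sequence, and sandwich exactly as in Theorem~\ref{finitary_Ising}. The paper states this in two sentences and does not spell out the simultaneous coupling or the handling of the distance-2 / connected-complement condition for the WUSF example; your write-up is more explicit on both points and is consistent with what the paper intends.

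For completeness, the paper also gives a first proof with a different idea, valid only when $G$ is almost surely not two-ended. There one uses only the decreasing (free) approximation from Theorem~\ref{main}: edges \emph{not} in $\omega_G$ are detected finitarily because their status stabilizes at some finite stage $N$; edges \emph{in} $\omega_G$ are detected finitarily by invoking one-endedness of the USF components, so that removing $e$ leaves a finite side $C_0$, and once all edges of $\partial C_0\setminus\{e\}$ have been certified absent (finitarily, by the previous case), $e$ is forced to be present. Your sandwich proof avoids any appeal to the one-endedness literature, which is exactly the advantage the paper highlights for its second proof.
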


It is enough to prove the existence of a finitary factor of iid representation, which can then be made finite valued by \cite{MSp}, using amenability.

\begin{proof}[1st proof]
This proof only works for the case when $G$ is not two-ended almost surely.
It is known that then all the components of the WUSF are one-ended. Namely, it was shown in \cite{AL} that every tree of the WUSF of a transient URG is almost surely one-ended when the URG has bounded degrees, then in \cite{H} the bounded degree assumption got removed, and finally the claim was shown in \cite{EH} for all recurrent URG's that are not two-ended (that is, one-ended). See this latter for a detailed history and further references.

In the proof of Theorem \ref{main}, suppose that $e$ is an edge and $M$ is the first time when $e\in\Gamma_M$. If $e$ is not in the FUSF$=:\omega_G$ then $\omega_{\Gamma_n}$ for all $n\geq N$, where $N$ is either equal to $M$ and $e\not\in\omega_{\Gamma_N}$, or $N>M$ is the time when $e$ is removed from $\omega_{\Gamma_N}$, i.e., $e\in \omega_{\Gamma_{N-1}}\setminus \omega_{\Gamma_N}$. So one can tell if $e$ is not in $\omega_G$ from looking at the neighborhood that contains its component in $\Gamma_N$.

Now suppose that $e\in\omega_G$. By assumption, the component $C$ of $e$ has one end. Consider the finite component $C_0$ of $C\setminus \{e\}$, and observe that none of the finitely many edges of $\partial C_0 \setminus \{e\}$ is in
$\omega_{\Gamma_n}$ if $n$ is large enough. But once we know the status of these edges, we also know that $e$ has to be in $\omega_G$, because otherwise one of its endpoints would be in a finite component, which is not possible. So from a large enough neighborhood we can tell with certainty that $e\in\omega_G$, by the previous paragraph. 

\medskip

\noindent [2nd proof]
For invariantly amenable URG's the free and the wired uniform spanning forests are the same \cite{BLPS}.
Using that the WUSF is a compatible monotone increasing limit, the FUSF is a compatible monotone decreasing limit, and using that the wired is dominated by the free, we can apply a similar sandwiching argument as in the proof of Theorem \ref{finitary_Ising}, to get that the factor is finitary. Unlike the previous proof, this one does not use any prior knowledge about the number of ends.
\qed
\end{proof}

\begin{remark}\label{remark8}
Similarly to Remark \ref{remark7}, the constructions in Theorem \ref{finitary_Ising} and the second proof of Theorem \ref{finitary_USF} are $\Gamma_n$-local.
\end{remark}

\noindent
{\bf Acknowledgments:} 
I thank Russ Lyons and Yinon Spinka for valuable comments on the manuscript and G\'abor Pete for discussions.

\ \\

{\bf \'Ad\'am Tim\'ar}\\
Division of Mathematics, The Science Institute, University of Iceland\\
Dunhaga 3 IS-107 Reykjavik, Iceland\\
and\\
HUN-REN Alfr\'ed R\'enyi Institute of Mathematics\\
Re\'altanoda u. 13-15, Budapest 1053 Hungary\\
\texttt{madaramit[at]gmail.com}\\

\end{document}